\font\smallit=cmti10
\newtheorem{Main}{Theorem}[section]
\newtheorem{Lemma 1}[Main]{Lemma}
\newtheorem{Lemma 2}[Main]{Lemma}
\newtheorem{Lemma 3}[Main]{Lemma}
\newtheorem{Corollary}[Main]{Corollary}
\newtheorem{sub}[Main]{Theorem}
\newtheorem{exact}[Main]{Theorem}
\newtheorem{Coro}[Main]{Corollary}
\def\h25{\hspace{-.3cm}}
\begin{document}

\begin{center}
{\bf On unavoidable obstructions in Gaussian walks}
\vskip 20pt {\bf Sai Teja Somu\ \ and \ \ Ram Krishna Pandey}\\
{\smallit Department of Mathematics, Indian Institute of Technology
Roorkee, Roorkee - 247 667, India}\\
{\tt somuteja@gmail.com;\ ramkpandey@gmail.com }\\
\end{center}
\vskip 30pt

\centerline{\bf Abstract} In this paper we investigate a problem about certain walks in the ring of Gaussian integers. Let $n,d$ be two natural numbers. Does there exist a sequence of Gaussian integers $z_j$ such that $|z_{j+1}-z_j|=1$ and a pair of indices $r$  and $s$, such that $z_{r}-z_{s}=n$ and for all indices $t$ and $u$, $z_{t}-z_{u}\neq d$? If there exists such a sequence we call $n$ to be $d$ avoidable. Let $A_n$ be the set of all $d\in \mathbb{N}$ such that $n$ is not $d$ avoidable. Recently, Ledoan and Zaharescu proved that $\{d \in \mathbb{N} : d|n\}\subset A_n$. We extend this result by giving a necessary and sufficient condition for $d\in A_n$ which answers a question posed by Ledoan and Zaharescu. We also find a precise formula for the cardinality of $A_n$ and answer three other questions raised in the same paper.
%
\section{Introduction}
Walks in Gaussian integers have been investigated in the past by several authors (\cite{A}, \cite{B}, \cite{C}, \cite{D}, \cite{E}) to work on the question of whether one can start in the vicinity of the origin of the complex plane and walk to infinity using the Gaussian primes and only taking steps of bounded length. Recently, in \cite{F} there has been an investigation in a different direction. In that paper the authors have investigated walks of unit steps and demonstrated that there exists some kind of divisibility obstruction. Let $n,d$ be two natural numbers if there exists a sequence of Gaussian integers $z_j$ such that $|z_{j+1}-z_j|=1$ and a pair of indices $r$  and $s$, such that $z_{r}-z_{s}=n$ and for all indices $t$ and $u$, $z_{t}-z_{u}\neq d$.
 If there exists such a sequence we call $n$ to be $d$ avoidable. Let $A_n$ be the set of all $d\in \mathbb{N}$ such that $n$ is not $d$ avoidable. In \cite{F}, they prove that the set of divisors of $n$ is a subset of $A_n$. That is, if $d|n$ then $n$ is not $d$ avoidable.

In section 2, we give the precise structure of $A_n$ along with the cardinality of $A_n$. From this precise definition of $A_n$, we answer four of the six questions asked in \cite{F} in section 3.
Before going to the main theorem of section 2, let us consider the following example.

\noindent {\bf Example}. Let $n=20$. We consider three sequences $S_1$, $S_2$ and $S_3$ defined as follows:
\begin{align*}
S_1:~~ & z_0 = 0, z_{12} = 10,  z_{24} = 20,\\
& z_j=j-1-i \hspace{2 mm} \text{for} \hspace{2 mm} 1\leq j\leq 11,\\
& z_j=j-3+i \hspace{2 mm} \text{for} \hspace{2 mm} 13\leq j\leq 23.
\end{align*}
Here, $z_{24}-z_0 = 20$ and one can see that the set of all positive integer differences is $\{1,2,3,4,5,6,7,8,9,10,20\}$.

$S_2:~~  z_0=0,z_1=i,z_2=2i,z_3=1+2i,z_4=2+2i,z_5=3+2i,z_6=4+2i, z_7=5+2i,z_8=6+2i,z_9=7+2i,z_{10}=7+i,z_{11}=7,z_{12}=7-i, z_{13}=8-i,z_{14}=9-i,z_{15}=9,z_{16}=9+i,z_{17}=10+i,z_{18}=11+i, z_{19}=12+i,z_{20}=13+i,z_{21}=14+i,z_{22}=14,z_{23}=14-i,z_{24}=14-2i, z_{25}=15-2i,z_{26}=16-2i,z_{27}=17-2i,z_{28}=18-2i,z_{29}=19-2i,z_{30}=20-2i, z_{31}=20-i,z_{32}=20.$\\
The set of all positive differences is $\{1,2,3,4,5,6,7,9,10,11,12,13,14,20\}$.

$S_3:~~ z_0=0,z_1=i,z_2=2i,z_3=1+2i,z_4=2+2i,z_5=3+2i,z_6=4+2i, z_7=5+2i,z_8=6+2i,z_9=7+2i,z_{10}=8+2i,z_{11}=8+i,z_{12}=8, z_{13}=8-i,z_{14}=9-i,z_{15}=10-i,z_{16}=10,z_{17}=10+i,z_{18}=11+i, z_{19}=12+i,z_{20}=13+i,z_{21}=14+i,z_{22}=15+i,z_{23}=16+i,z_{24}=16, z_{25}=16-i,z_{26}=16-2i,z_{27}=17-2i,z_{28}=18-2i,z_{29}=19-2i,z_{30}=20-2i, z_{31}=20-i,z_{32}=20.$\\
The set of all positive differences is $\{1,2,3,4,5,6,7,8,10,11,12,13,14,15,16,20\}$.

The intersection of positive difference sets of $S_1$, $S_2$ and $S_3$ is $\{1,2,3,4,5,6,7,10,20\}$. If we try to go from $0$ to $20$ in any walk we suspect that we cannot avoid any number that belongs to the intersection. We believe that $A_{20}=\{1,2,3,4,5,6,7,10,20\}$.

Let $n = k(n,d)d + r(n,d),$ where $r(n,d)$ is a unique integer belonging to $\left[-\left\lfloor\frac{d}{2}\right\rfloor, \left\lceil\frac{d}{2}\right\rceil-1\right].$ Consider the following table
 \begin{center}
  \begin{tabular}{| l | l | l | l | l |}
  \hline
  $d$ & $k(20,d)$ & $r(20,d)$ & $A_{20}$ & $k(20,d)-|r(20,d)|$ \\
  \hline 1 & 20 & 0 & $\in$ & 20\\
  \hline 2 & 10 & 0 & $\in$ & 10\\
  \hline 3 & 7 & -1 & $\in$ & 6\\
  \hline 4 & 5 & 0 & $\in$ & 5\\
  \hline 5 & 4 & 0 & $\in$ & 4\\
  \hline 6 & 3 & 2 & $\in$ & 1\\
  \hline 7 & 3 & -1 & $\in$ & 2\\
  \hline 8 & 3 & -4 & $\notin$ & -1\\
  \hline 9 & 2 & 2 & $\notin$ & 0\\
  \hline 10 & 2 & 0 & $\in$ & 2\\
  \hline 11 & 2 & -2 & $\notin$ & 0\\
  \hline 12 & 2 & -4 & $\notin$ &-2\\
  \hline 13 & 2& -6& $\notin$ &-4\\
  \hline 14 & 1& 6& $\notin$ &-5\\
  \hline 15 & 1& 5&$\notin$ &-4\\
  \hline 16 & 1& 4&$\notin$ &-3\\
  \hline 17 & 1& 3&$\notin$ &-2\\
  \hline 18 & 1& 2&$\notin$ &-1\\
  \hline 19 & 1& 1&$\notin$ & 0\\
  \hline 20 & 1& 0&$\in$ & 1\\
  \hline
 \end{tabular}
 \end{center}
 If we assume for a moment that $A_{20}=\{1,2,3,4,5,6,7,10,20\},$ then from the table, we observe that $d\in A_{20}$ if and only if $k(20,d)-|r(20,d)|\geq 1$. We prove that this property is true not only for $20$ but for all natural numbers $n$. This is the main result of the paper which is presented in the next section.

 \section{Main results}

 \begin{Main} \label{main}
 Let $n\geq 1$ and $d\geq 1$ be integers. Then $d\in A_n$ if and only if
 $k(n,d)\geq |r(n,d)|+1.$
 \end{Main}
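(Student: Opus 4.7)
The plan is to work within a level-set framework. After translation and passage to a sub-walk, I may assume the walk satisfies $z_0=0$, $z_N=n$, and $\mathrm{Re}(z_j)\in[0,n]$ throughout. Define the column level set $L(m)=\{\mathrm{Im}(z_j):\mathrm{Re}(z_j)=m\}$ for $0\le m\le n$, with extremes $\mu(m)=\min L(m)$ and $M(m)=\max L(m)$. The three driving facts are (i) $0\in L(0)\cap L(n)$; (ii) $L(m)\cap L(m+1)\ne\emptyset$, whence $\mu(m+1)\le M(m)$ and $\mu(m)\le M(m+1)$; and (iii) if the walk avoids difference $d$, then $L(m)\cap L(m+d)=\emptyset$ for $0\le m\le n-d$.

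For the sufficient direction ($k\le|r|$), I construct an explicit staircase walk avoiding $d$. It consists of $k+1$ horizontal corridors at pairwise distinct imaginary heights joined by short vertical transitions at real parts $0=m_0<m_1<\cdots<m_{k+1}=n$, each corridor of real length at most $d-1$. The corridor heights alternate above and below the real axis, patterned on the walk $S_2$ for $n=20$, $d=8$ in the introductory example; the slack $|r|-k\ge 0$ provides exactly the room needed to fit $k+1$ distinct heights so that columns $m$ and $m+d$ always lie in different corridors at disjoint imaginary levels. The cases $r>0$, $r<0$, and $r=0$ are treated separately.

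For the necessary direction ($k\ge|r|+1$) I argue by contradiction, assuming a walk avoids $d$. The crux is a key lemma promoting (iii) to a monotonicity: path-connectivity forces the relative position of $L(m+d)$ with respect to $L(m)$ to be constant in $m$, so after possibly reflecting imaginary parts I may assume $M(m)<\mu(m+d)$ for every $0\le m\le n-d$. This rules out interleaved configurations $y_1<y_2<y_3$ with $y_1,y_3\in L(m)$ and $y_2\in L(m+d)$: the sub-walk joining the two witnesses in $L(m)$ must, by an intermediate-value argument on its imaginary coordinate, visit level $y_2$ at some column, and examining that column against (iii) yields a forbidden pair. Combining $M(m)<\mu(m+d)$ with $\mu(m+1)\le M(m)$ gives $\mu(j)\ge\mu(j-(d-1))+1$ for $j\in[d,n]$, and symmetrically $M(j)\ge M(j-(d-1))+1$. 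Iterating $k$ times from $j=n$ yields
\[
\mu(n)\ge\mu(r+k)+k,\qquad M(n)\ge M(r+k)+k,
\]
with the $k$-fold iteration valid precisely when $r+k\ge 1$, which is ensured by $k\ge|r|+1$.

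Combining with the boundary data $\mu(n)\le 0\le M(n)$ and $\mu(0)\le 0\le M(0)$, and applying the same iteration to the reversed walk $w_j=n-z_{N-j}$ (which also avoids $d$) at the complementary column $n-r-k$, produces simultaneously $\mu(r+k)\le -k$ and $M(n-r-k)\ge k$; interpolating between these columns via (ii) forces an imaginary excursion incompatible with the path data, yielding the contradiction. The principal obstacle is the strict-ordering key lemma; once that is in place, the contradiction assembles by iterating (ii) and (iii) and pinning the endpoints, while the construction side is a direct if laborious case analysis.
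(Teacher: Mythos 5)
Your constructive half (building a $d$-avoiding walk realizing $n$ when $k\le|r|$) is in the same spirit as the paper, which exhibits explicit comb-shaped paths $P_1,P_2$ and verifies in an eight-case analysis (Lemma 2.3) that no two of their points differ by $d$; you defer exactly that verification to ``a direct if laborious case analysis,'' so nothing is gained or lost there. The obstruction half, however, is where your proposal has genuine gaps. First, the normalization is false: you cannot assume both $z_0=0$, $z_N=n$ \emph{and} $\mathrm{Re}(z_j)\in[0,n]$ throughout, since a walk may be forced to leave the strip before returning (e.g.\ $0\to -1\to -1+i\to\cdots\to n+i\to n$), and no sub-walk containing both endpoints stays inside. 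Second, and more seriously, the proof you sketch for the key lemma does not work. The claim that an interleaved configuration $y_1<y_2<y_3$ with $y_1,y_3\in L(m)$, $y_2\in L(m+d)$ forces a forbidden pair via an intermediate-value argument is refuted by the walk
\[
(0,0),(1,0),(2,0),(3,0),(4,0),(4,1),(5,1),(4,1),(4,2),(3,2),(2,2),(1,2),(0,2),
\]
which avoids difference $5$, yet has $L(0)=\{0,2\}$ interleaving $L(5)=\{1\}$: the sub-walk joining $(0,0)$ to $(0,2)$ does visit level $1$, but only at columns $4$ and $5$, where fact (iii) produces no contradiction. So disjointness plus connectivity alone does not give the separation $M(m)<\mu(m+d)$, and any proof of it must invoke the global endpoint data --- which is essentially the whole theorem. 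Third, even granting the key lemma, the endgame is not assembled: the inequalities $\mu(r+k)\le -k$ and $M(n-r-k)\ge k$ are not by themselves incompatible with fact (ii), which only gives $\mu(m+1)\le M(m)$ and $\mu(m)\le M(m+1)$ and places no bound on how far a single column's level set can extend; you never exhibit the ``forbidden pair'' that closes the argument.

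For contrast, the paper's route to the hard direction is quite different and much shorter: it takes the Ledoan--Zaharescu result ($d\mid n\Rightarrow d\in A_n$) as a black-box base case and runs a minimality argument on $|r|$ --- appending one extra step to a putative $d$-avoiding walk realizing $n=kd+r$ produces a walk realizing $kd+r'$ with $|r'|=|r|-1$, and the induced forbidden pair must involve the new point, which then lands at $(k\pm1)d+r'$ and feeds back into the induction. If you want a self-contained level-set proof you are in effect re-proving the divisor case of Ledoan--Zaharescu as well, and the separation lemma is precisely the hard part you would need to supply.
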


 In order to prove one part of the theorem we require the following lemma.

\begin{Lemma 1}\label{lemma1}
Let $n,$ $d>0$ and $n=kd+r$. If $k\geq |r|+1$ then $d\in A_n$.
\end{Lemma 1}
\begin{proof}
Let $r$ be the number with least absolute value such that there exists a $k\geq |r|+1$ and $n=kd+r$ for some $n>0$ and $d>0$ such that $d\notin A_n$. Then $r\neq 0$ as $r=0$ would imply that $d|n$ and from \cite{F} $d\in A_n$. Next $d\notin A_n$ implies that there exists a sequence $S = (z_j)$ of Gaussian integers such that $z_0=0\in S$ and $z_l=n\in S$ where $z_l$ is the final term and $|z_{p+1}-z_p|=1$ for $0\leq p\leq l-1$, $z_j-z_{j'}\neq d$ for $0\leq j,$ $j' \leq l$.

Now we create an  $l+2$ term sequence $S' = (z_p')$ with $z'_p=z_p$ for $0\leq p\leq l$ and $z'_{l+1}=z$ where
\[
z:=
\left\{
  \begin{array}{ll}
    n-1, & \hbox{if $r\geq 1$;} \\
    n+1, & \hbox{if $r \leq -1$,}
  \end{array}
\right.
\]
$z=kd+r'$ with $|r'|=|r|-1$. Hence, the minimality assumption implies that $d\in A_z$ since $k\geq |r'|+2$. Hence, there should exist two points $x$, $y\in S'$ such that $x-y=\pm d$ and  both $x$, $y$ cannot be in $S$ as $z_j-z_{j'}\neq d$ for $0\leq j,$ $j' \leq l$.
Without loss of generality, let $y = z_{l+1}'=z$ and $x=z\pm d$. Hence, $x=(k\pm 1)d+r'$ with $k\pm 1 \geq |r'|+1\in S$ and from minimality assumption on $|r|$, $d\in A_x$ and hence there exists two points $z_{i_1}$ and $z_{i_2}$ in the sequence $S$ such that $z_{i_1}-z_{i_2}=d$ contradicting $z_j-z_{j'}\neq d$ for $0\leq j,$ $j' \leq l$. This completes the proof of the lemma.
\end{proof}

Now we prove theorem \ref{main}.
\begin{proof}
Let $k(n,d)\geq |r(n,d)|+1$. Then $n=k(n,d)d+r(n,d)$ and the proof follows from Lemma \ref{lemma1}.
For the converse part, we prove that if $k(n,d)\leq |r(n,d)|$ then $d\notin A_n$.
For the simplicity, let $k=k(n,d)$, $r=r(n,d)$. Then $n=kd+r$. Let $h=d+1$. From now on we treat Gaussian integers as ordered pairs of integers.
Let $m\geq 0$ be an integer. Let $R_m$ and $T_m$ denote the set of Gaussian integers in the vertical line segments joining ${\big(}m(d+1),-m{\big)}$, ${\big(}m(d+1),h-m{\big)}$
and ${\big(}(m+1)(d-1), h-m{\big)}$, ${\big(}(m+1)(d-1), -m-1{\big)}$, respectively and let $S_m$ and $U_m$ denote the set of Gaussian integers in the horizontal line segment joining ${\big(}m(d+1),h-m{\big)}$, ${\big(}(m+1)(d-1),h-m{\big)}$ and ${\big(}(m+1)(d-1),-m-1{\big)}$, ${\big(}(m+1)(d+1),-m-1{\big)}$, respectively. Let $P_1$ be a set defined as follows: If $d$ is odd,
\[P_1=R_0\cup S_0\cup T_0 \cup U_0\cdots R_{\frac{d-1}{2}-1}\cup S_{\frac{d-1}{2}-1}\cup T_{\frac{d-1}{2}-1} \cup U_{\frac{d-1}{2}-1} \cup R_{\frac{d-1}{2}},\]
and if $d$ is even,
\[P_1=R_0\cup S_0\cup T_0 \cup U_0\cdots R_{\frac{d}{2}-2}\cup S_{\frac{d}{2}-2}\cup T_{\frac{d}{2}-2} \cup U_{\frac{d}{2}-2} \cup R_{\frac{d}{2}-1}\cup S_{\frac{d}{2}-1}\cup T_{\frac{d}{2}-1}.\]
Let the sets of Gaussian integers in the line segments joining ${\big(}m(d+1),-m{\big)}$, ${\big(}m(d+1),h-m{\big)}$; ${\big(}m(d+1),h-m{\big)}$, ${\big(}(m+1)(d-1)-1,h-m{\big)}$; ${\big(}(m+1)(d-1)-1,h-m{\big)}$, ${\big(}(m+1)(d-1)-1,-m-1{\big)}$ and ${\big(}(m+1)(d-1)-1,-m-1{\big)}$, ${\big(}(m+1)(d+1),-m-1{\big)}$ be $R'_m$, $S'_m$, $T'_m$ and $U'_m$, respectively.

Further, let $P_2$ be another set defined as follows. If $d$ is odd,
\[P_2=\{(-1,0)\}\cup R'_0\cup S'_0\cup T'_0 \cup U'_0\cdots R'_{\frac{d-5}{2}}\cup S'_{\frac{d-5}{2}}\cup T'_{\frac{d-5}{2}} \cup U'_{\frac{d-5}{2}} \cup R'_{\frac{d-3}{2}}\cup S'_{\frac{d-3}{2}} \cup T'_{\frac{d-3}{2}},\]
and if $d$ is even,
\[P_2=\{(-1,0)\}\cup R'_0\cup S'_0\cup T'_0 \cup U'_0\cdots R'_{\frac{d-4}{2}}\cup S'_{\frac{d-4}{2}}\cup T'_{\frac{d-4}{2}} \cup U'_{\frac{d-4}{2}} \cup R'_{\frac{d}{2}-1}.\]
It is not difficult to see that there exist two sequences $S_1$ and $S_2$ of Gaussian integers whose ranges are $P_1$ and $P_2$, respectively and such that for every two consecutive terms of sequence $z_j$ and $z_{j+1}$ of either $S_1$ or $S_2$ we have $|z_j-z_{j+1}|=1$.
In Lemma \ref{lemma2}, we prove that  neither of the sets $P_1$ or $P_2$ have two elements (picked from the same set) whose difference is $d$.

One can clearly see that if $d$ is odd, then
\[\left\{(m(d+1),0) : 0\leq m\leq \frac{(d-1)}{2} \right\} \bigcup \left\{(i(d-1),0) : 0\leq i\leq \frac{(d+1)}{2} \right\}\subset P_1,\]
\[\left\{(m(d+1),0) : 0\leq m\leq \frac{(d-3)}{2} \right\}\bigcup \left\{(i(d-1)-1,0) : 0\leq i\leq \frac{(d-1)}{2} \right\}\subset P_2.\]
and if $d$ is even, then
\[\left\{(m(d+1),0) : 0\leq m\leq \frac{d}{2}-1 \right\}\bigcup \left\{(i(d-1),0) : 0\leq i\leq \frac{d}{2} \right\}\subset P_1,\]
\[\left\{(m(d+1),0) : 0\leq m\leq \frac{d}{2}-1 \right\}\bigcup \left\{(i(d-1)-1,0) : 0\leq i\leq \frac{d}{2}-1 \right\}\subset P_2.\]

It is given that $n=kd+r$ and $k\leq |r|$. Let $k$ and $r$ are of same parity. If $r > 0$. Let  $m=\frac{r+k}{2}$ and $i=\frac{r-k}{2}$. Since $P_1$ passes through $(m(d+1),0)$ and $(i(d-1),0)$ and  $m(d+1)-i(d-1)=n$, we have $d\notin A_n$ as $P_1$ has no two elements with $d$ as a difference.

If $r < 0$, choose $m=\frac{-r-k}{2}$ and $i=\frac{-r+k}{2}$. As $(m(d+1),0)\in P_1$ and $(i(d-1),0)\in P_1$ and $i(d-1)-m(d+1)=n$, we have $d\notin A_n$.

Next, let $k$ and $r$ are of different parity. If $r > 0$, choose $m=\frac{r-1+k}{2}$ and $i=\frac{r-1-k}{2}$  and if $r < 0$, choose $m=\frac{-r-k-1}{2}$ and $i=\frac{k-r-1}{2}$.  We observe that $(m(d+1),0)\in P_2$ and $(i(d-1)-1,0)\in P_2$ and $m(d+1)-(i(d-1)-1)=\pm n$ and since $P_2$ has no two elements with $d$ as a difference, we have $d\notin A_n$. This completes the proof of the theorem.
\end{proof}

\begin{Lemma 2} \label{lemma2}
Neither of the sets $P_1$ or $P_2$ have two elements (picked from the same set) whose difference is $d$.
\end{Lemma 2}
\begin{proof}
If $d$ is odd,
\begin{align*}
P_1& =R_0\cup S_0\cup T_0 \cup U_0\cdots R_{\frac{d-1}{2}-1}\cup S_{\frac{d-1}{2}-1}\cup T_{\frac{d-1}{2}-1} \cup U_{\frac{d-1}{2}-1} \cup R_{\frac{d-1}{2}}\\
&= \cup_{i=0}^{\frac{d-1}{2}}R_i\bigcup \cup_{i=0}^{\frac{d-1}{2}-1}S_i \bigcup \cup_{i=0}^{\frac{d-1}{2}-1}T_i\bigcup \cup_{i=0}^{\frac{d-1}{2}-1}U_i,
\end{align*}
and
\begin{align*}
P_2& =\{(-1,0)\}\cup R'_0\cup S'_0\cup T'_0 \cup U'_0\cdots R'_{\frac{d-5}{2}}\cup S'_{\frac{d-5}{2}}\cup T'_{\frac{d-5}{2}} \cup U'_{\frac{d-5}{2}} \cup R'_{\frac{d-3}{2}}\cup S'_{\frac{d-3}{2}} \cup T'_{\frac{d-3}{2}}\\
&= \{(-1,0)\}\bigcup \cup_{i=0}^{\frac{d-3}{2}}R'_i\bigcup \cup_{i=0}^{\frac{d-3}{2}}S'_i\bigcup \cup_{i=0}^{\frac{d-3}{2}}T'_i\bigcup \cup_{i=0}^{\frac{d-5}{2}}U'_i.
\end{align*}

If $d$ is even,
\begin{align*}
P_1& =R_0\cup S_0\cup T_0 \cup U_0\cdots R_{\frac{d}{2}-2}\cup S_{\frac{d}{2}-2}\cup T_{\frac{d}{2}-2} \cup U_{\frac{d}{2}-2} \cup R_{\frac{d}{2}-1}\cup S_{\frac{d}{2}-1}\cup T_{\frac{d}{2}-1}\\
& = \cup_{i=0}^{\frac{d}{2}-1}R_i\bigcup \cup_{i=0}^{\frac{d}{2}-1}S_i\bigcup \cup_{i=0}^{\frac{d}{2}-1}T_i\bigcup \cup_{i=0}^{\frac{d}{2}-2}U_i,
\end{align*}
and
\begin{align*}
P_2& =\{(-1,0)\}\cup R'_0\cup S'_0\cup T'_0 \cup U'_0\cdots R'_{\frac{d-4}{2}}\cup S'_{\frac{d-4}{2}}\cup T'_{\frac{d-4}{2}} \cup U'_{\frac{d-4}{2}} \cup R'_{\frac{d}{2}-1}\\
&=\{(-1,0)\}\bigcup \cup_{i=0}^{\frac{d}{2}-1}R'_i\bigcup \cup_{i=0}^{\frac{d}{2}-2}S'_i\bigcup \cup_{i=0}^{\frac{d}{2}-2}T'_i\bigcup \cup_{i=0}^{\frac{d}{2}-2}U'_i.
\end{align*}

There are two kinds of segments, vertical: $R_i$, $T_i$, $R'_i$ and $T'_i$ and horizontal: $S_i$, $U_i$, $S'_i$ and $U'_i$.
 If $d$ is odd, then for $0\leq i \leq \frac{d-1}{2}$ and $1\leq j \leq \frac{d-1}{2}$ and if $d$ is even, then for $0\leq i \leq \frac{d}{2}-1$ and $1\leq j \leq \frac{d}{2}$, the $x$ coordinates of vertical segments of $P_1$ are $i(d+1)$ and $j(d-1)$. Hence the x-coordinates of vertical segments modulo $d$ are $i$ and $-j$ for respective intervals of $i$ and $j$ when $d$ is odd or even. Hence one can observe that these are distinct modulo $d$ and hence there cannot be any two elements differing by $d$ which belong to two vertical lines. Similarly, if $d$ is odd, then for $0\leq i \leq \frac{d-3}{2}$ and $1\leq j \leq \frac{d-1}{2}$ and if $d$ is even, then for $0\leq i \leq \frac{d}{2}-1$ and $1\leq j \leq \frac{d}{2} - 1 $, the $x$ coordinates for different vertical line segments of $P_2$ are $i(d+1)$ and $j(d-1)-1$.
Hence the $x$-coordinates of different vertical line segments are distinct modulo $d$ and hence there cannot be any two elements whose difference is $d$.
Since $(-1\pm d,0)\notin P_2$ we can ignore about $(-1,0)$ in $P_2$.
Further, $d$ cannot be achieved as a difference of any two elements of the same horizontal line as in both $P_1$ and $P_2$ the length of horizontal segments are strictly less than $d$.
As the heights of different horizontal segments do not match there cannot be any two elements differing by $d$ from any two distinct horizontal segments. The only case to be taken into consideration is  an element from a vertical segment and an element from a horizontal segment. Thus the remaining cases left to consider are points on $R_i$, $S_j$; $R_i$, $U_j$; $T_i$, $S_j$, $T_i$, $U_j$, $R'_i$, $S'_j$; $R'_i$, $U'_j$; $T'_i$, $S'_j$ and $T'_i$, $U'_j$. We show that there cannot be any two elements with $d$ as a difference in all the eight cases.

{\bf Case 1:} ($R_i$, $S_j$). In this case, to have $d$ as a difference we require $(i(d+1)\pm d,l_1)=(l_2,h-j)$, where
\begin{align*}
& 0\leq i\leq \frac{d-1}{2} \hspace{2 mm}\text{if d is odd},\\
& 0\leq i \leq \frac{d}{2}-1\hspace{2 mm}\text{if d is even},\\
& 0\leq j \leq \frac{d-1}{2}-1\hspace{2 mm}\text{if d is odd},\\
& 0\leq j \leq \frac{d}{2}-1\hspace{2 mm}\text{if d is even},
\end{align*}
and $l_1\in [-i,h-i]$, $l_2\in [j(d+1),(j+1)(d-1)]$. This implies that $h-j\in [-i,h-i]$ or $j\in [i,i+h]$. But for $j\in [i,i+h]$, $i(d+1)\pm d \notin [j(d+1),(j+1)(d-1)]$.

{\bf Case 2:} ($R_i$, $U_j$). To have $d$ as a difference we require $(i(d+1)\pm d,l_1)=(l_2,-j-1)$, where
\begin{align*}
& 0\leq i\leq \frac{d-1}{2} \hspace{2 mm}\text{if d is odd},\\
& 0\leq i \leq \frac{d}{2}-1\hspace{2 mm}\text{if d is even},\\
& 0\leq j \leq \frac{d-1}{2}-1\hspace{2 mm}\text{if d is odd},\\
& 0\leq j \leq \frac{d}{2}-2\hspace{2 mm}\text{if d is even},
\end{align*}
and  $l_1\in [-i,h-i]$ and $l_2\in [(j+1)(d-1),(j+1)(d+1)]$. This implies that $j\in [i-h-1,i-1]$ but $i(d+1)\pm d\notin [(j+1)(d-1),(j+1)(d+1)]$ for $j\leq (i-1)$.

{\bf Case 3:} ($T_i$, $S_j$). To have $d$ as a difference we require $((i+1)(d-1)\pm d,l_1)=(l_2,h-j)$, where
\begin{align*}
& 0\leq i\leq \frac{d-1}{2}-1 \hspace{2 mm}\text{if d is odd},\\
& 0\leq i \leq \frac{d}{2}-1\hspace{2 mm}\text{if d is even},\\
& 0\leq j \leq \frac{d-1}{2}-1\hspace{2 mm}\text{if d is odd},\\
& 0\leq j \leq \frac{d}{2}-1\hspace{2 mm}\text{if d is even},
\end{align*}
and  $l_1\in [-i-1,h-i]$ and $l_2\in [j(d+1),(j+1)(d-1)]$. This implies that $j\in [i,h+i+1]$ but $(i+1)(d-1)\pm d \notin [j(d+1),(j+1)(d-1)]$ for $j\in [i,h+i+1]$.

{\bf Case 4:} ($T_i$, $U_j$). To have $d$ as a difference we require $((i+1)(d-1)\pm d,l_1)=(l_2,-j-1)$, where
\begin{align*}
& 0\leq i\leq \frac{d-1}{2}-1 \hspace{2 mm}\text{if d is odd},\\
& 0\leq i \leq \frac{d}{2}-1\hspace{2 mm}\text{if d is even},\\
& 0\leq j \leq \frac{d-1}{2}-1\hspace{2 mm}\text{if d is odd},\\
& 0\leq j \leq \frac{d}{2}-2\hspace{2 mm}\text{if d is even},
\end{align*}
and  $l_1\in [-i-1,h-i]$ and $l_2\in[(j+1)(d-1),(j+1)(d+1)]$. This implies that $j\in [i-h-1,i]$ but $(i+1)(d-1)\pm d \notin [(j+1)(d-1),)(j+1)(d+1)]$.

{\bf Case 5:} ($R'_i$, $S'_j$). To have $d$ as a difference we require $(i(d+1)\pm d,l_1)=(l_2,h-j)$, where
\begin{align*}
& 0\leq i\leq \frac{d-3}{2}\hspace{2 mm}\text{If $d$ is odd },\\
&  0\leq i\leq \frac{d}{2}-1\hspace{2 mm}\text{If $d$ is even },\\
& 0\leq j\leq \frac{d-3}{2}\hspace{2 mm}\text{If $d$ is odd },\\
& 0\leq j\leq \frac{d}{2}-2\hspace{2 mm}\text{If $d$ is even },
\end{align*}
and  $l_1\in[-i,h-i]$ and $l_2\in[j(d+1),(j+1)(d-1)-1]$. This implies that $j\in [i,h+i]$ but $i(d+1)\pm d\notin [j(d+1),(j+1)(d-1)-1]$ for $j\in [i,h+i]$.

{\bf Case 6:} ($R'_i$, $U'_j$). To have $d$ as a difference we require $(i(d+1)\pm d,l_1)=(l_2,-j-1)$, where
\begin{align*}
& 0\leq i\leq \frac{d-3}{2}\hspace{2 mm}\text{If $d$ is odd },\\
&  0\leq i\leq \frac{d}{2}-1\hspace{2 mm}\text{If $d$ is even },\\
& 0\leq j\leq \frac{d-5}{2}\hspace{2 mm}\text{If $d$ is odd },\\
&  0\leq j\leq \frac{d}{2}-2\hspace{2 mm}\text{If $d$ is even },\\
\end{align*}
and  $l_1\in [-i,h-i]$ and $l_2\in [(j+1)(d-1)-1,(j+1)(d+1)]$. This implies that $j\in [i-h-1,i-1]$ but $i(d+1)\pm d\notin [(j+1)(d-1)-1,(j+1)(d+1)]$ for $j\leq i-1$.

{\bf Case 7:} ($T'_i$, $S'_j$) To have $d$ as a difference we require $((i+1)(d-1)-1\pm d,l_1)=(l_2,h-j)$, where
\begin{align*}
& 0\leq i\leq \frac{d-3}{2}\hspace{2 mm}\text{If $d$ is odd },\\
&  0\leq i\leq \frac{d}{2}-2\hspace{2 mm}\text{If $d$ is even },\\
& 0\leq j\leq \frac{d-3}{2}\hspace{2 mm}\text{If $d$ is odd },\\
&  0\leq j\leq \frac{d}{2}-2\hspace{2 mm}\text{If $d$ is even },
\end{align*}
and  $l_1\in [-i-1,h-i]$ and $l_2\in [j(d+1),(j+1)(d-1)-1]$, This implies that $j\in [i,h+i+1]$ but $(i+1)(d-1)-1\pm d\notin [j(d+1),(j+1)(d-1)-1]$ for $j\geq i$.

{\bf Case 8:} ($T'_i$, $U'_j$). To have $d$ as a difference we require $((i+1)(d-1)-1\pm d,l_1)=(l_2,-j-1)$, where
\begin{align*}
& 0\leq i\leq \frac{d-3}{2}\hspace{2 mm}\text{If $d$ is odd },\\
&  0\leq i\leq \frac{d}{2}-2\hspace{2 mm}\text{If $d$ is even },\\
& 0\leq j\leq \frac{d-5}{2}\hspace{2 mm}\text{If $d$ is odd },\\
&  0\leq j\leq \frac{d}{2}-2\hspace{2 mm}\text{If $d$ is even },
\end{align*}
and  $l_1\in [-i-1,h-i]$ and $l_2\in [(j+1)(d-1)-1,(j+1)(d+1)]$. This implies that $j\in [i-h-1,i]$ but for $j\in [i-h-1,i]$ $(i+1)(d-1)-1\pm d\notin [(j+1)(d-1)-1,(j+1)(d+1)]$. This completes the proof of the lemma.
\end{proof}

An immediate consequence of theorem \ref{main} is the following corollary.

\begin{Corollary}\label{corollary}
If $n\geq \frac{d^2}{2}$, then $d\in A_n$.
\end{Corollary}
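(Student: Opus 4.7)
My plan is to derive this corollary directly from Theorem \ref{main} by contrapositive. Theorem \ref{main} tells us that $d \notin A_n$ is equivalent to $k(n,d) \leq |r(n,d)|$, so it suffices to show that this inequality forces $n < d^2/2$.

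Writing $k = k(n,d)$ and $r = r(n,d)$, so that $n = kd + r$ with $r \in [-\lfloor d/2 \rfloor, \lceil d/2 \rceil - 1]$, I would split on the sign of $r$. If $r \geq 0$ and $k \leq |r| = r$, then
\[
n = kd + r \leq rd + r = r(d+1) \leq (\lceil d/2 \rceil - 1)(d+1),
\]
and a quick check of the two parities gives $(d/2-1)(d+1) = (d^2-d-2)/2$ when $d$ is even and $((d-1)/2)(d+1) = (d^2-1)/2$ when $d$ is odd; both are strictly less than $d^2/2$. Symmetrically, if $r < 0$ and $k \leq |r| = -r$, then
\[
n = kd + r \leq -rd + r = |r|(d-1) \leq \lfloor d/2 \rfloor (d-1),
\]
which evaluates to $(d^2-d)/2$ in the even case and $(d-1)^2/2$ in the odd case, again both strictly less than $d^2/2$.

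Combining these, whenever $k(n,d) \leq |r(n,d)|$ we obtain $n < d^2/2$. Contrapositively, $n \geq d^2/2$ forces $k(n,d) \geq |r(n,d)| + 1$, and Theorem \ref{main} delivers $d \in A_n$. There is no real obstacle here; the only thing to be careful about is that the bound on $|r|$ differs slightly between $r \geq 0$ (where $|r| \leq \lceil d/2\rceil - 1$) and $r < 0$ (where $|r| \leq \lfloor d/2\rfloor$), so both cases must be handled separately and in each case both parities of $d$ must be checked. None of these four subcomputations presents any difficulty.
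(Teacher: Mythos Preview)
Your proof is correct and follows essentially the same approach as the paper's own argument: both use the identity $n = kd + r$ together with the bounds $|r| \leq \lfloor d/2\rfloor$ (or $\lceil d/2\rceil - 1$) to compare $k$ with $|r|$, splitting on the sign of $r$ and the parity of $d$. The only cosmetic difference is that the paper argues directly (showing $n \geq d^2/2 \Rightarrow k > |r|$) and splits first on parity, whereas you argue by contrapositive and split first on the sign of $r$; the underlying inequalities are the same.
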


\begin{proof}
Let $d$ be even. Clearly, $-\frac{d}{2}\leq r(n,d) \leq \frac{d}{2}-1$.
If $r(n,d) < 0$, then
\[k(n,d)>\frac{n}{d}\geq \frac{d}{2}\geq |r(n,d)|,\]
and if $r(n,d)\geq 0$, then
\[k(n,d)=\frac{n-r(n,d)}{d}\geq \frac{(n-\frac{d}{2}+1)}{d}>\frac{\frac{d^2}{2}-\frac{d}{2}}{d}\geq |r(n,d)|.\]

Next, let $d$ be odd. Clearly, $-\frac{d-1}{2}\leq r(n,d) \leq \frac{d-1}{2}$ and hence
\[k(n,d)=\frac{n-r(n,d)}{d}\geq \frac{n-\frac{d-1}{2}}{d}>\frac{d^2-d}{2d}\geq |r(n,d)|.\]
Hence, in each case $k(n,d) > |r(n,d)|$ and so by theorem \ref{main}, $d\in A_n$.
\end{proof}

Lemma \ref{lemma1} can be generalized in following theorem using the techniques of Lemma \ref{lemma1}.

\begin{sub}
If $S$ is a sequence of Gaussian integers such that every two consecutive terms $z_j$ and $z_{j+1}$ satisfy $|z_{j+1}-z_j|=1$ and there exists a pair $j_1$ and $j_2$ such that $z_{j_2}-z_{j_1}=n+ih$ and for a natural number $d$ if $k(n,d)\geq |r(n,d)|+|h|+1$ then there exists a pair of terms in the sequence $j_3$ and $j_4$ such that $z_{j_3}-z_{j_4}=d$.
\end{sub}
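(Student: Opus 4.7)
The plan is to prove the statement by induction on $|h|$, with the base case $h = 0$ handled directly by Lemma \ref{lemma1} and the inductive step modeled on the appending argument of Lemma \ref{lemma1}, now performed with a vertical (imaginary) unit step in place of a horizontal one.

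When $h = 0$ the hypothesis reduces to $k(n,d) \geq |r(n,d)| + 1$. Restricting $S$ to the indices between $j_1$ and $j_2$ and translating so that $z_{j_1} = 0$ produces a unit-step walk from $0$ to $n$, and Lemma \ref{lemma1} then guarantees the required pair of indices.

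For the inductive step, assume the statement holds for all strictly smaller values of $|h|$. By conjugating the sequence if needed we may take $h \geq 1$, and after restricting to indices in $[j_1, j_2]$ and translating we may take $z_0 = 0$ and $z_L = n + ih$. Suppose for contradiction that $S$ contains no pair with difference $d$. Append a single unit step $z_{L+1} = z_L - i = n + i(h-1)$, producing an extended sequence $S'$. The pair $(z_0, z_{L+1})$ realizes the difference $n + i(h-1)$, and the admissibility hypothesis for the reduced triple $(n, d, h-1)$ is $k(n,d) \geq |r(n,d)| + h$, which is implied by the original hypothesis. The inductive hypothesis therefore forces $S'$ to contain a pair with difference $d$; since $S$ does not, the pair must involve $z_{L+1}$, so there exists $m \in [0, L]$ with $z_m - z_{L+1} = \pm d$. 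Thus $S$ itself realizes the difference $z_m - z_0 = (n \pm d) + i(h-1)$.

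Apply the inductive hypothesis once more, this time to the triple $(n \pm d, d, h-1)$ with the pair $(z_0, z_m)$ in $S$. Since $r(n \pm d, d) = r(n,d)$ and $k(n \pm d, d) = k(n,d) \pm 1$, the admissibility bounds reduce to $k(n,d) + 1 \geq |r(n,d)| + h$ and $k(n,d) - 1 \geq |r(n,d)| + h$, both of which follow from the assumption $k(n,d) \geq |r(n,d)| + h + 1$. Furthermore, $h \geq 1$ forces $k(n,d) \geq |r(n,d)| + 2 \geq 2$, hence $n \geq 2d - \lfloor d/2 \rfloor > d$, so $n - d$ is a positive integer. The inductive hypothesis therefore produces a pair in $S$ with difference $d$, contradicting our assumption. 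The main obstacle is simply the admissibility bookkeeping and the verification that $n - d \geq 1$ in the second application; both follow automatically from the starting hypothesis once $|h| \geq 1$, so the generalization from Lemma \ref{lemma1} is essentially obtained by running the same reduction along a second coordinate.
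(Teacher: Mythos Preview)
Your argument is correct and is essentially the paper's own proof: the paper phrases the induction on $|h|$ as a minimal-counterexample argument and handles the sign of $h$ with a parameter $\theta\in\{-1,+1\}$ rather than by conjugation, but the key step---appending a single imaginary unit step, forcing the new endpoint to be paired with some $z_m=(n\pm d)+i(h\!-\!1)$, and then reapplying the hypothesis to $(n\pm d,d,h\!-\!1)$---is identical. Your additional check that $n-d\ge 1$ (needed to keep $n\pm d$ positive in the second application) is a detail the paper leaves implicit.
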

\begin{proof}

Let $h$ be the number with least absolute value such that there exists a  sequence $S = (z_j)$ with consecutive terms satisfying $|z_{j+1}-z_j|=1$, $z_0 = 0$, $z_l = n+ih$, $k(n,d)\geq |r(n,d)|+|h|+1$ , $z_r-z_s\neq d$ for $0\leq r, s \leq l$ and $z_l$ is the last term of the sequence. We have, $h\neq 0$ from theorem \ref{main}. We define a new sequence $S' = (z_j')$ such that $z'_j=z_j$ for $0\leq j \leq l$ and $z'_{l+1}=n+i(h+\theta)$ where $\theta$ is given by
\[
\theta=
\left\{
  \begin{array}{ll}
    -1, & \hbox{if $h\geq 0$;} \\
    +1, & \hbox{if $h<0$.}
  \end{array}
\right.
\]
Hence, $|h+\theta|=|h|-1$ and from the minimality assumption on $|h|$ there exists two terms $x\in S'$ and $y\in S'$ such that $x-y=\pm d$. Both of them cannot belong to $S$ as we assumed that there are no terms in $S$ with a  difference $d$. Hence without loss of generality, let $x=(n,h+\theta)$ and $y=(n\pm d,h+\theta)\in S$ and since $k(n\pm d,d)\geq |r(n,d)|+|h+\theta|+1$,  $|h+\theta|=|h|-1$ from minimality assumption on $|h|$ there exists two terms of $S$ with $d$ as difference which contradicts the assumption about $S$ that $z_r-z_s\neq d$ for $0\leq r, s \leq l$.
\end{proof}

We close this section by giving a formula for the cardinality of $A_n$.

\begin{exact}
The cardinality of $A_n$ is
\[ \lfloor \sqrt{2n} \rfloor + 2\left\lfloor \frac{n+1}{\sqrt{2n}+1} \right\rfloor-\sum_{\substack{d|n \\d<\frac{n+1}{\sqrt{2n}+1}}}1 +\theta(n),\]
where $\theta(n) = |\{d : d>\sqrt{2n} {\text { and }} \frac{n+1}{\sqrt{2n}+1}\leq k(n,d) <\sqrt{\frac{n}{2}}+\frac{1}{2}\}|$. Further, $\theta(n)\in \{0,1,2\}$.
\end{exact}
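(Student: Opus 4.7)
The plan is to use Theorem \ref{main} to rephrase $d\in A_n$ as the arithmetic inequality $k(n,d)\ge|r(n,d)|+1$, and then to count admissible $d$'s in three strata separated by the quantities $\sqrt{2n}$ and $L:=(n+1)/(\sqrt{2n}+1)$.

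For the first stratum $d\le\sqrt{2n}$, Corollary \ref{corollary} gives $d\in A_n$ automatically, contributing the leading term $\lfloor\sqrt{2n}\rfloor$. Any remaining $d>\sqrt{2n}$ satisfies $k=k(n,d)<\sqrt{n/2}+\tfrac12$, so I would stratify these by $k$. By Theorem \ref{main}, $d\in A_n$ with $k(n,d)=k$ is equivalent to $|n-kd|\le k-1$, i.e.\ $d\in I_k:=\bigl[(n-k+1)/k,\,(n+k-1)/k\bigr]$. This interval has length $2-2/k<2$, and a residue calculation (any admissible $r$ satisfies $r\equiv n\pmod k$ with $|r|\le k-1$) shows $|I_k\cap\mathbb Z|=2$ when $k\nmid n$ and $|I_k\cap\mathbb Z|=1$, namely $d=n/k$, when $k\mid n$.

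The second stratum consists of $k\in\{1,\ldots,\lfloor L\rfloor\}$. The defining identity $L(\sqrt{2n}+1)=n+1$ rewrites as $(n-L+1)/L=\sqrt{2n}$, which shows the left endpoint of $I_k$ strictly exceeds $\sqrt{2n}$ exactly when $k<L$; one also verifies $d>\sqrt{2n}>2(k-1)$ in this range, which guarantees each candidate $d$ lies in the centered remainder range and genuinely has $k(n,d)=k$, so no $d$ in $I_k$ is double-counted with the first stratum. Summing the per-$k$ counts gives
\[
\sum_{k=1}^{\lfloor L\rfloor}\bigl(2-[k\mid n]\bigr)\;=\;2\lfloor L\rfloor\;-\;\bigl|\{k\mid n\,:\,k<L\}\bigr|,
\]
which matches the middle two summands of the stated formula after the natural renaming of the summation variable.

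The third stratum is $L\le k<\sqrt{n/2}+\tfrac12$, and its contribution to $|A_n|$ is $\theta(n)$. To verify $\theta(n)\in\{0,1,2\}$, the key computation is
\[
\Bigl(\sqrt{n/2}+\tfrac12\Bigr)-L\;=\;\frac{(\sqrt{2n}+1)^2-2(n+1)}{2(\sqrt{2n}+1)}\;=\;\frac{2\sqrt{2n}-1}{2(\sqrt{2n}+1)}\;<\;1,
\]
so the interval $[L,\sqrt{n/2}+\tfrac12)$ contains at most one integer $k$; for that $k$, the set $I_k\cap\mathbb Z$ has at most two elements by the earlier analysis, hence $\theta(n)\le 2$. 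The main obstacle is the boundary bookkeeping for the second and third strata: verifying that each candidate $d$ from $I_k$ indeed has $k(n,d)=k$ (not an off-by-one neighbour), that it lies strictly above $\sqrt{2n}$ so as not to duplicate the first stratum, and that the residual $d$'s in the third stratum are exactly those counted by $\theta$. Once these checks are settled, summing the three contributions reproduces the stated cardinality formula.
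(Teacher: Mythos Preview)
Your proposal is correct and follows essentially the same three-stratum strategy as the paper: the contribution $\lfloor\sqrt{2n}\rfloor$ from $d\le\sqrt{2n}$ via Corollary~\ref{corollary}, the count over $k<L$ yielding $2\lfloor L\rfloor-\sum_{k\mid n,\,k<L}1$, and the residual $\theta(n)$ from the short interval $[L,\sqrt{n/2}+\tfrac12)$. Two minor remarks: your observation that establishing $k(n,d)=k$ (via $|r|\le k-1<d/2$) automatically prevents the same $d$ from arising for two different $k$'s is slightly slicker than the paper, which instead proves injectivity by a separate algebraic estimate on $|k_2 r(n,d_1)-k_1 r(n,d_2)|$; and your explicit computation of the length $(\sqrt{n/2}+\tfrac12)-L<1$ fills in a step the paper only asserts.
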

\begin{proof}
From corollary \ref{corollary}, if $d\leq \sqrt{2n}$ then $d\in A_n$. So we have to count the remaining $d>\sqrt{2n}$ and $d\in A_n$. If $d\in A_n$ and $d>\sqrt{2n}$ then $n=k(n,d)d+r(n,d)$ and $k(n,d)\geq |r(n,d)|+1$. So
\[k(n,d)=\frac{n-r(n,d)}{d}\leq \frac{n}{d}+\frac{1}{2}<\sqrt{\frac{n}{2}}+\frac{1}{2},\]
since $|r(n,d)|\leq \frac{d}{2}$.
Now for counting remaining $d$ we count number of $k < \sqrt{\frac{n}{2}}+\frac{1}{2}$ and count number of distinct $d>\sqrt{2n}$ for which $k(n,d)=k$.

{\bf Case 1:} ($k<\frac{n+1}{\sqrt{2n}+1}$ and $k\nmid n$).

There are two values of $r$, say $r_1$ and $r_2$ such that $k|(n-r)$ and $-(k-1)\leq r \leq (k-1)$. For $i\in\{1,2\}$, let $d_i=\frac{n-r_i}{k}$. We have
\[d_i=\frac{n-r_i}{k}\geq \frac{n-(k-1)}{k}>\sqrt{2n},\]
and
\[|r_i| \leq k-1<\frac{n+1}{\sqrt{2n}+1}-1\leq \frac{d_i-1} {2}.\]
This implies that $r_i\in [-\lfloor\frac{d}{2}\rfloor,\lceil \frac{d}{2}-1\rceil]$.
Hence $k(n,d_i)=k$ and $d_1\neq d_2$.
So each $k$ satisfying $k<\frac{n+1}{\sqrt{2n}+1}$ and $k\nmid n$ corresponds to two distinct $d>\sqrt{2n}$.

{\bf Case 2:} ($k<\frac{n+1}{\sqrt{2n}+1}$ and $k|n$).

There exists precisely one value of $r=0$ in the interval $-(k-1)\leq r \leq (k-1)$ satisfying $k|(n-r)$.
The corresponding $d$ is
\begin{equation*}
d=\frac{n}{k}>\sqrt{2n}
\end{equation*}
Hence, each such $k$ corresponds to precisely one value of $d>\sqrt{2n}$ such that $k(n,d)=k$.

{\bf Case 3:} ($\frac{n+1}{\sqrt{2n}+1} \leq k<\sqrt{\frac{n}{2}}+\frac{1}{2}$).

Since there can be atmost one $k$ in the interval and such a $k$ can at most correspond to $2$ distinct values of $d>\sqrt{2n}$. Let $\theta(n)$ correspond to the number of $d>\sqrt{2n}$ such that $\frac{n+1}{\sqrt{2n}+1}\leq k(n,d) <\sqrt{\frac{n}{2}}+\frac{1}{2}$. Clearly $\theta(n)\in\{0,1,2\}$.

We claim that distinct $k_1<\sqrt{\frac{n}{2}}+\frac{1}{2}$ and $k_2<\sqrt{\frac{n}{2}}+\frac{1}{2}$ correspond to distinct $d>\sqrt{2n}$.  Let $k(n,d_1)=k_1$ and $k(n,d_2)=k_2$. If $d_1=d_2$, then
\begin{equation}
\frac{n-r(n,d_1)}{k_1}=\frac{n-r(n,d_2)}{k_2}\implies (k_2-k_1)n=k_2r(n,d_1)-k_1r(n,d_2).
\end{equation}
Since $|r(n,d_i)|\leq k_i-1$ for $i\in\{1,2\}$, therefore
\[|k_2r(n,d_1)-k_1r(n,d_2)|\leq k_2(k_1-1)+k_1(k_2-1)<2\left(\sqrt{\frac{n}{2}}+\frac{1}{2}\right)\left(\sqrt{\frac{n}{2}}-\frac{1}{2}\right)<n.\]
Now, as the right hand side of (2.1) is a multiple of $n$ and the absolute value  is strictly less than $n$, right hand side has to be $0$ which implies $k_1=k_2$.

Hence, the count of total number of $d$ in all cases is
\begin{align*}
|A_n|=& |\{d\in \mathbb{N}:d\leq \sqrt{2n}\}|+2|\{k\in \mathbb{N}: k<\frac{n+1}{\sqrt{2n}+1} \text{ and }k\nmid n\}|+\\ & |\{k\in \mathbb{N}: k<\frac{n+1}{\sqrt{2n}+1} \text{ and }k|n\}|+\theta(n)\\
& =\lfloor \sqrt{2n} \rfloor+2\left\lfloor \frac{n+1}{\sqrt{2n}+1} \right\rfloor-\sum_{\substack{d|n \\d<\frac{n+1}{\sqrt{2n}+1}}}1 +\theta(n).
\end{align*}
\end{proof}

\begin{Coro}
The cardinality of $A_n$ for all $\epsilon>0$ is
\begin{equation*}
|A_n|=2\sqrt{2n}+O(n^\epsilon).
\end{equation*}
\end{Coro}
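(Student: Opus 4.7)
The plan is to extract the asymptotics directly from the exact formula for $|A_n|$ established in the preceding theorem, namely
\[ |A_n| = \lfloor \sqrt{2n} \rfloor + 2\left\lfloor \frac{n+1}{\sqrt{2n}+1} \right\rfloor-\sum_{\substack{d|n \\d<\frac{n+1}{\sqrt{2n}+1}}}1 +\theta(n),\]
and to estimate each of the four terms on the right-hand side separately.

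First I would handle the two floor terms. Clearly $\lfloor \sqrt{2n}\rfloor = \sqrt{2n} + O(1)$. For the middle term, I would use the elementary identity
\[\frac{n+1}{\sqrt{2n}+1} = \sqrt{\frac{n}{2}}\cdot\frac{1+1/n}{1+1/\sqrt{2n}} = \sqrt{\frac{n}{2}} + O(1),\]
which gives $2\lfloor (n+1)/(\sqrt{2n}+1)\rfloor = 2\sqrt{n/2} + O(1) = \sqrt{2n} + O(1)$. Adding these two contributions already produces the main term $2\sqrt{2n}$, with an error of $O(1)$.

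Next I would dispose of the remaining two terms. The term $\theta(n)$ is trivially $O(1)$, since the theorem guarantees $\theta(n)\in\{0,1,2\}$. The divisor sum is bounded by the full divisor count $d(n)$, and the standard estimate $d(n) = O(n^\epsilon)$ for every $\epsilon>0$ yields
\[\sum_{\substack{d\mid n\\ d<\frac{n+1}{\sqrt{2n}+1}}}1\ \leq\ d(n)\ =\ O(n^\epsilon).\]

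Combining these four estimates,
\[|A_n| = \sqrt{2n} + \sqrt{2n} + O(1) + O(n^\epsilon) + O(1) = 2\sqrt{2n} + O(n^\epsilon),\]
which is the desired bound. There is no real obstacle here; the only point requiring the slightly weaker error term $O(n^\epsilon)$ (rather than $O(1)$) is the divisor sum, since $d(n)$ can exceed any fixed power of $\log n$ but remains smaller than any positive power of $n$.
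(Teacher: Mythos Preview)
Your proof is correct. The paper itself states this corollary without proof, evidently regarding it as an immediate consequence of the exact formula together with the standard divisor bound $d(n)=O(n^\epsilon)$; your argument is exactly this intended derivation, carried out cleanly term by term.
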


\section{Answers to some questions raised in \cite{F}}

Ledoan and Zaharescu (\cite{F}, section 3) raised six questions. We answer four of the six questions below.

{\bf Question 1} asks which positive integers belong to $A_n$ and theorem \ref{main} answers the question.
\par
{\bf Question 2} asks for which positive integers $n$,  $A_n$ is equal to the set of all divisors of $n$. We claim that the only numbers $n$ for which $A_n$ is equal to the set of all divisors of $n$ are $1,2,4,6,12$. One can check that $n=1,2,4,6,12$ are the only numbers $\leq 13$ such that $A_n$ is equal to the set of divisors of $n$. From corollary \ref{corollary}, both $\lfloor\sqrt{2n}\rfloor$ and  $\lfloor\sqrt{2n}-1\rfloor$ are in $A_n$ and for them to be divisors of $n$,  ($\lfloor\sqrt{2n}\rfloor)(\lfloor\sqrt{2n}-1\rfloor)|n$. But for $n\geq 14$, $(\sqrt{2n}-1)(\sqrt{2n}-2)>n$ and hence $n=1,2,4,6,12$ are the only numbers.

{\bf Question 4} For which numbers $n$ there exists a sequence of Gaussian integers $S$ such that $n\in \mathcal{A}$ (which is the set of differences of the terms of the sequence $S$) and such that for each divisor $d$ of $n$, with $1 < d < n$, either both $d-1$ and $d + 1$ are divisors of $n$, or at least one of
$d-1$ or $d + 1$ is not in $\mathcal{A}$?

We claim that the numbers which satisfy the hypothesis are precisely all prime numbers together with $\{1,4,6,12\}$.

For, if $n=1$ or $n$ is a prime then the hypothesis is vacuously true. If $n$ is a composite number  $\geq 14$, then since $\sqrt{2n}-1\geq \sqrt{n}$ for $n\geq 14$ there exists a divisor $d$ satisfying $1<d\leq\sqrt{2n}-1$. Let $d$ be the largest integer dividing $n$ and $\leq \sqrt{2n}-1$.

{\bf Case 1:} Atleast one of $d-1$ and $d+1$ does not divide $n$.

Then from corollary \ref{corollary}, $d-1 \in A_n$ and $d+1 \in A_n$ . Clearly, $n$ does not satisfy the hypothesis.

{\bf Case 2:} Both $d-1$, $d+1$ divide $n$. 
We have $d(d+1)|n$. Since we have assumed that $d$ is the greatest divisor $\leq \sqrt{2n}-1$ which implies $d+1>\sqrt{2n}-1$. $d(d+1)|n$ implies that $(\sqrt{2n}-1)(\sqrt{2n}-2)\leq n$ but $(\sqrt{2n}-1)(\sqrt{2n}-2)> n$ for $n\geq 14$.

Hence there are no composite $n\geq 14$ satisfying the hypothesis. One can check that for $n\leq 13$ and $n$ is not a prime precisely $1,4,6,12$ satisfy the hypothesis.

{\bf Question 5} For which numbers $n$ there exists a sequence of Gaussian integers such that $n\in \mathcal{A}$ and such that for each divisor $d$ of $n$ with $K < d < n-K$ either all the numbers $d-K, d-K + 1, \ldots, d + K$ are
divisors of $n$ or at least one of $d-K, d-K + 1, . . . , d + K$ is not in $\mathcal{A}$?
\par
We claim that the set of  numbers satisfying the hypothesis is $\{mp : 1\leq m\leq K \text{ and }p \text{ is a prime }\geq (2K+1)\}$ together with a finite set.

{\bf Case 1:}
 Let $n=mp$, where $m\leq K$ and $p\geq 2K+1$. Then $n=m(p+1)-m$, which implies that $|r(n,p+1)| \geq k(n,p+1).$ From theorem \ref{main}, $p+1\notin A_n$. Therefore, there exists a sequence $S$ which does not have any two elements whose difference is $p+1$ and contains two terms with difference $n$. Since any divisor $d>K$ of $n$ is of the form $d=d'p$ where $d'$ is divisor of $m$, therefore for each $d'p$, $d'p+d'$($d'\leq K$) is not in the difference set of $S$, as if $d'p+d'\in \mathcal{A}$ we have $p+1 \in \mathcal{A}$, which is not true. Hence the sequence $S$ satisfies the hypothesis.

{\bf Case 2:} $n\notin \{mp : 1\leq m\leq K \text{ and }p \text{ is a prime }\geq (2K+1)\}$, $n\geq (2K+1)K$, $\frac{\sqrt{2n}-K}{K}>K$, $\sqrt{2n}-K\geq \sqrt{n}$ and $(\sqrt{2n}-K)(\sqrt{2n}-K-1) > n$.

Clearly, $n$ is not a prime. Let $d$ be the greatest integer dividing $n$ and satisfying $d\leq \sqrt{2n}-K$. We claim that $d>K$. If $d\leq K$, then $n=md$ for some $m$. Let $p$ be a prime dividing $m$. Since $pd|n$ and $pd>d$, from maximality of $d$,  $pd>\sqrt{2n}-K$ which implies $p>\frac{\sqrt{2n}-K}{K}>K$. Since $n\notin \{mp : 1\leq m\leq K \text{ and }p \text{ is a prime }\geq (2K+1) \}$, $m$ is not a prime. Hence, $m$ has atleast two prime factors each of them being greater than $K$ and atleast one of them will be less than $\sqrt{n}\leq \sqrt{2n}-K$. Hence, atleast one of the prime factor of $m$ is greater than $d$ and $\leq \sqrt{2n}-K$, contradicting the maximality of $d$. Thus $d>K$.
\par
If $(d+1)|n$, then from maximality of $d$, $(d+1)>\sqrt{2n}-K$ and $d(d+1)|n$ which implies $(\sqrt{2n}-K)(\sqrt{2n}-K-1)\leq n$. But $(\sqrt{2n}-K)(\sqrt{2n}-K-1)> n$. Hence $(d+1)\nmid n$.

For all $1\leq i \leq K$, $d\pm i\leq \sqrt{2n}$ and from corollary \ref{corollary}, $d\pm i\in A_n$ and hence $n$ does not satisfy the hypothesis.

{\bf Case 3:} $n\notin \{mp : 1\leq m\leq K \text{ and }p \text{ is a prime }\geq (2K+1) \}$ and atleast one of the inequalities $n\geq (2K+1)K$, $\frac{\sqrt{2n}-K}{K}>K$, $\sqrt{2n}-K\geq \sqrt{n}$ and $(\sqrt{2n}-K)(\sqrt{2n}-K-1) > n$  is not true.
This accounts for finitely many exceptions.

\bibliographystyle{amsplain}

\begin{thebibliography}{10}

  \bibitem {A}J. H. Jordan and J. R. Rabung, {\it A conjecture of Paul Erd\H{o}s concerning Gaussian primes}, Math. Comput., {\bf 24} (1970), 221-223.

  \bibitem {B} E. Gethner and H. M. Stark, {\it Periodic Gaussian moats}, Experiment. Math., {\bf 6} (1997), 251-254.

  \bibitem {C} E. Gethner, S. Wagon, and B. Wick, {\it A stroll through the Gaussian primes}, Amer. Math.
  Monthly {\bf 105} (1998), no. 4, 327-337.

  \bibitem{F}A. Ledoan, A. Zaharescu, {\it A Divisibility obstruction for certain walks on
 Gaussian integers}, INTEGERS: Electronic Journal of Combinatorial Number Theory, {\bf 14} (2014), A56, 1-8.

  \bibitem{D}P.-R. Loh, {\it Stepping to infinity along Gaussian primes}, Amer. Math. Monthly, {\bf 114} (2007),
  no. 2, 142-151.

  \bibitem{E}I. Vardi, {\it Prime percolation}, Experiment. Math., {\bf 7} (1998), no. 3, 275-289.


 \end{thebibliography}

  \end{document}